\documentclass[11pt]{article}

\topmargin -0.10in
\oddsidemargin 0.3in
\textheight 8.50in
\textwidth 6.00in

\usepackage{array}
\usepackage[T1]{fontenc}
\usepackage{graphicx}
\usepackage{amsfonts}
\usepackage{amsmath}
\usepackage{amsfonts}
\usepackage{amsthm}
\usepackage{amssymb}
\usepackage{amscd}
\usepackage{epsfig}
\usepackage{verbatim}
\usepackage{fancybox}
\usepackage{moreverb}
\usepackage{psfrag}
\usepackage{cellspace}
\usepackage{hyperref}
\usepackage{latexsym}
\usepackage{psfrag}
\usepackage{color}

%
%
\numberwithin{equation}{section}
\definecolor{blanc}{rgb}{1.,1.,1.}
\definecolor{vert}{rgb}{0.,0.5,0.}
\definecolor{rouge}{rgb}{0.8,0.,0.}
\definecolor{violet}{rgb}{0.5,0.,0.4}
\definecolor{bleu}{rgb}{0.,0.,0.5}
\definecolor{orange}{rgb}{0.8,0.4,0.}
\definecolor{light-blue}{rgb}{0.5,0.5,0.7}
\definecolor{light-red}{rgb}{0.8,0.4,0.4}
\definecolor{noir}{rgb}{0.,0.,0.}
\definecolor{gris}{rgb}{0.8,0.8,0.7}

\newcommand{\eps}{\varepsilon}

\newcommand{\yv}{\boldsymbol{y}}
\newcommand{\rv}{\boldsymbol{r}}

\newcommand{\inp}{i\in\{1,\dots,N_p\}}

\newtheorem{theorem}{Theorem}[section]
\newtheorem{lemma}[theorem]{Lemma}

\newtheorem{remark}[theorem]{Remark}

\newtheorem{algorithm}[theorem]{Algorithm}
\newtheorem{approximation}[theorem]{Approximation}

\def\eps{\varepsilon}


\begin{document}

\title{Long time simulation of a highly oscillatory Vlasov equation with an
exponential integrator}
\author{
Emmanuel Fr\'enod\thanks{Université Bretagne-Sud, UMR 6205, LMBA, F-56000 Vannes,
France 
\& Inria Nancy-Grand Est, TONUS Project }
\and
Sever A. Hirstoaga\thanks{Inria Nancy-Grand Est, TONUS Project \& IRMA
(UMR CNRS 7501), Universit\'e de Strasbourg, France}
\and
Mathieu Lutz \thanks{IRMA (UMR CNRS 7501), Universit\'e de Strasbourg,
7 rue Ren\'e Descartes, F-67084 Strasbourg Cedex, France \&
Inria Nancy-Grand Est, TONUS Project}
}
\date{}
\maketitle

\begin{abstract}
We change a previous time-stepping algorithm for solving a multi-scale
Vlasov-Poisson system within a Particle-In-Cell method, in order to do accurate
long time simulations. As an exponential integrator, the new scheme allows to
use large time steps compared to the size of oscillations in the solution.
\end{abstract}



\section{Introduction}

In this paper, we improve the time numerical scheme introduced in
\cite{frenod/hirstoaga/sonnendrucker}, in order to make long time simulation of
the following Vlasov equation
\begin{gather}
\label{VlasAxSymBeam}   
\left\{
\begin{aligned}
    &\frac{\partial f_{\eps}}{\partial t}+\frac{v}{\eps}\frac{\partial
    f_{\eps}}{\partial r}+\Big(-\frac{r}{\eps}+E_{\eps}(r,t)\Big)\frac{\partial
    f_{\eps}}{\partial v}=0,
    \\
    &f_{\eps}\negthickspace\left(0,r,v\right)=f_{0}\left(r,v\right),
\end{aligned}
\right.
\end{gather}
where $0<\eps\ll1$, $r\in\mathbb{R}$ is the position variable, $v\in\mathbb{R}$
the velocity variable, $f_{\eps}=f_{\eps}(r,v,t)$ is the distribution
function, $f_0$ is given, and $-\frac{r}{\eps}+E_{\eps}(r,t)$ corresponds to the
electric field. The difficulty in the numerical solution of this equation lies
mainly in the multi-scale aspect due to the small parameter $\eps$.

The main application will be the case when the electric field $E_{\eps}$ is 
obtained by solving the Poisson equation. We will thus have to solve the
following nonlinear system of equations
\begin{gather}
\label{VlasPoissAxSymBeam}   
\left\{
\begin{aligned}
  &\frac{\partial f_{\eps}}{\partial t}+\frac{v}{\eps}\frac{\partial
    f_{\eps}}{\partial r}+\big(-\frac{r}{\eps}+E_{\eps}\left(r,t\right)\big)
  \frac{\partial f_{\eps}}{\partial v}=0,  \\
     &\frac{\partial}{\partial r}\big(rE_{\eps}\big)=\rho_{\eps}\negmedspace
  \left(t,r\right), \hspace{3mm} \rho_{\eps}\negmedspace\left(t,r\right)=
  \int_{\mathbb{R}}f_{\eps}\left(t,r,v\right)dv,   \\  
     &f_{\eps}\negthickspace\left(0,r,v\right)=f_{0}\left(r,v\right).
\end{aligned}
\right.
\end{gather}
This simplified model of the full Vlasov-Maxwell system is particularly adapted
to the study of long and thin beams of charged particles within the paraxial
approximation assuming that the beam is axisymmetric (see
\cite{2007arXiv0710.3983F} and the references therein).

We will also test our scheme when 
\begin{equation} \label{RappelForce}
  E_{\eps}(r,t)=-r^3.
\end{equation}

As in \cite{frenod/hirstoaga/sonnendrucker}, we solve numerically the Vlasov
equation by a particle method, which consists in approximating the distribution
function by a finite number of macroparticles (see \cite{Birdsall}). The
trajectories of these particles are computed from the characteristic curves of
the Vlasov equation, whereas the term $E_{\eps}$ is computed, when obtained from
the Poisson equation, on a mesh in the physical space. The contribution of this
work is to improve the time numerical scheme proposed in
\cite{frenod/hirstoaga/sonnendrucker}, for solving in long times the stiff
characteristics associated to equation \eqref{VlasAxSymBeam}
\begin{gather}
\label{CharactCurvesOfGenVlas}  
\left\{
\begin{aligned}
     &\frac{\text{d} R}{\text{d} t}=\frac{V}{\eps},
     \\
     &\frac{\text{d} V}{\text{d} t}=-\frac{R}{\eps}+E_{\eps}(R,t),
\end{aligned}
\right.
\end{gather}
provided with the initial conditions $R(0)=r$, $V(0)=v$. We note that when
$E_{\eps}$ is given by \eqref{RappelForce}, solving \eqref{CharactCurvesOfGenVlas}
reduces to solve the undamped and undriven Duffing equation
\begin{equation}\label{DufDuf}  
R''\left(t\right)+\frac{R(t)}{\eps^2}+\frac{R^{3}(t)}{\eps}=0
\end{equation}
with initial conditions $R(0)=r$, $R'(0)=v/\eps$.

The solution of \eqref{VlasAxSymBeam} is represented by a beam of particles in
phase space moving along the characteristics curves \eqref{CharactCurvesOfGenVlas}.
When the electric field $E_{\eps}$ is zero the beam rotate around the origin with
period $2\pi\eps$. Otherwise, the dynamical system \eqref{CharactCurvesOfGenVlas}
can be viewed as a perturbation of the system obtained when the electric field is
zero (see \cite{lutz}). When the electric field $E_{\eps}$ is obtained by solving
the Poisson equation or if it is given by \eqref{RappelForce}, the beam evolves
by  rotating around the origin in the phase space, and in long times a bunch forms
around the center of the beam from which  filaments of particles are going out.
Classical explicit numerical methods require very small time step in order to
capture the fast rotation. Consequently, long time simulations are very tedious.

In \cite{frenod/hirstoaga/sonnendrucker} the authors introduced a new explicit
scheme based on Exponential Time Differencing (ETD) integrators (see 
\cite{cox/matthews} and \cite{HochbruckOstermann}). The idea of the algorithm
was the following: first, for each macroparticle as initial condition, we solve
the ODE in \eqref{CharactCurvesOfGenVlas} over some fast ({\em i.e.} of order
$\eps$) time by using a 4th order Runge-Kutta solver. The fast time is the same 
for all the macroparticles. Then, by means of the variation-of-constants formula
we compute an approximation of the solution over
a large whole number of fast times. Thus, this scheme (recalled in Algorithm
\ref{ClassicETDAlgo}) allows to take time steps $\Delta t$ varying from $0.1$ to
$1$ and therefore, when $\eps$ varies from $10^{-5}$ to $10^{-2}$, the numerical
gain increases from $3$ to $10^5$ when compare the scheme to a reference
solution. In addition, for short time simulation ($t=3.5$) the results are quite
precise (the numerical error is about $10^{-3}$ knowing that the beam is of size
$1$). For time of simulation less than $30$, this ETD method still gives good
qualitative behaviors. Nevertheless, it fails to capture precisely the
filamentation phenomena leading even to unstable simulation in long times.

In this paper we improve the time-stepping method following a simple idea.
The fast time in the above description of the algorithm was chosen
in \cite{frenod/hirstoaga/sonnendrucker} to be the mean of all the numerically
computed periods for the particle trajectories. The authors remarked that this
choice gives better results than using $2\pi\eps$. The algorithm that we propose
now is to not use a fixed value for the fast time, but to push each particle
with its period. This means that the first step should consist in solving the ODE
starting from each particle over different fast times (the particles computed
periods). However, implementing this simple idea turns out to be not so easy and
in addition asks for changing the order of the steps in the algorithm. We detail
this problem in Section \ref{Section3}. Our simulations show that using precise
periods for each particle and at each macroscopic time step results in a
more accurate ETD scheme in long times. In addition, by following the proof lines
in \cite{frenod/hirstoaga/sonnendrucker}, we show formally that the algorithm is
still an approximation when using any value for the fast time. We recall that in
\cite{frenod/hirstoaga/sonnendrucker}, the authors justified the algorithm when
using only $2\pi\eps$ as fast time. From now on we will use the incorrect term
of period to indicate the time that a particle takes to do a complete tour in
the phase space. 

The remainder of the paper is organized as follows. In Section \ref{Section2} we
briefly recall the ETD scheme introduced in \cite{frenod/hirstoaga/sonnendrucker}
and we do long time simulation. Then, Section \ref{Section3} is devoted to the
improvement of the method. Eventually, in Section \ref{Section4}, we implement 
and test the new algorithm on the Vlasov equation in the two cases of electric
field.



\section{Long time simulation of the exponential integrator}
\label{Section2}  

We first recall the exponential integrator introduced in
\cite{frenod/hirstoaga/sonnendrucker} for solving efficiently in short times the
Vlasov equation. Then, we discuss the long time simulations done with this
algorithm in the case of $E_\eps$ given by \eqref{RappelForce}, in Section
\ref{sec2.1} and in the Vlasov-Poisson case in Section \ref{sec2.3}.

\subsection{The previous exponential integrator for the Particle-In-Cell method}
The time-stepping scheme that we describe in this section for solving the Vlasov
equation \eqref{VlasAxSymBeam} is proposed in the framework of a Particle-In-Cell
method (PIC).  A PIC method (see \cite{Birdsall}) consists in approximating the
distribution function $f_\eps$, solution of \eqref{VlasAxSymBeam}, by a cloud of
macroparticles  $(R_i,V_i)_{\inp}$ (each one of them stands for a set of
physical particles) of weight $\omega_i$ advanced along the characteristics
curves \eqref{CharactCurvesOfGenVlas}. In other words we will approach the
distribution function by a sum of Dirac masses centered at the macroparticles
positions and velocities
\begin{align}
  &f_{\eps}\negmedspace\left(r,v,t\right)\approx\overset{N_p}{\underset{i=1}{\sum}}
  \omega_i\delta\left(r-R_i(t)\right)\delta\left(v-V_i(t)\right).
\end{align}
The weights are chosen such that 
\begin{align}
    &\overset{N_p}{\underset{i=1}{\sum}}\omega_i=\int_{\mathbb{R}^{2}}f_{0}(r,v)\,drdv,
\end{align}
and the particles are initialized according to the probability density associated
with $f_0$.

The aim of the exponential integrator introduced in
\cite{frenod/hirstoaga/sonnendrucker} is to solve \eqref{CharactCurvesOfGenVlas}
by using big time steps with respect to the typical period of oscillation, which
is about $2\pi\eps$. Now, we recall the main ideas of this algorithm. 
For simplicity we denote by $\boldsymbol{y}$ the characteristics, \textit{i.e.}
\begin{align}
  &\boldsymbol{y}=\left(\begin{array}{c}
    R\\
    V\end{array}\right).
    \label{SimplNotations}  
\end{align}
Under the assumption
\begin{equation}\label{assu}
\text{for all } n\;\;  \int_{t_n}^{t_n +N\cdot 2\pi\eps}
      {\cal R}(\tau) \left(\begin{array}{c}
        0 \\
        E_{\eps}(\tau, R(\tau))
      \end{array}\right)d\tau \approx N\cdot \int_{t_n}^{t_n+2\pi\eps}
      {\cal R}(\tau)\left(\begin{array}{c}
        0 \\
        E_{\eps}(\tau, R(\tau)) \end{array}\right)d\tau,
\end{equation}
where ${\cal R}$ is some $2\pi\eps$-periodic rotation in the phase space, the
authors derived the following scheme. Fix a time step $\Delta t\gg2\pi
\eps$ and determine the unique integer $N$ and the unique real $o$ such that 
\begin{gather}
\left\{
\begin{aligned}
    &\Delta t=N\cdot 2\pi\eps+o,
    \\
    &0\leq o< 2\pi\eps.
\end{aligned}
\right.
\end{gather}
For each macroparticle $\yv_i$ (denoted by $\yv$ for simplicity) apply
\begin{algorithm}
\label{ClassicETDAlgo}
Assume that $\boldsymbol{y}_{n}$ the solution of \eqref{CharactCurvesOfGenVlas}
at time $t_n$ is given.
\begin{enumerate}
\item Compute $\boldsymbol{y}\left(t_{n}+2\pi\eps\right)$ by using a fine
Runge-Kutta solver with initial condition $\yv_n.$
\item Compute $\boldsymbol{y}\left(t_{n}+N\cdot 2\pi\eps \right)$ by the rule
\begin{align}\label{step2ClassicalETDAlgo}
    &\boldsymbol{y}\left(t_{n}+N\cdot 2\pi\eps \right)=\boldsymbol{y}_{n} + 
  N\big(\boldsymbol{y}\left(t_{n}+2\pi\eps\right)-\boldsymbol{y}_{n}\big)
\end{align}
\item Compute $\boldsymbol{y}$ at time $t_{n+1}$ by using a fine Runge-Kutta
solver with initial condition $\yv( t_{n}+N\cdot 2\pi\eps )$, obtained
at the previous step.
\end{enumerate}
\end{algorithm}
\begin{figure}[ht]
\begin{center}
  \includegraphics[scale=0.5]{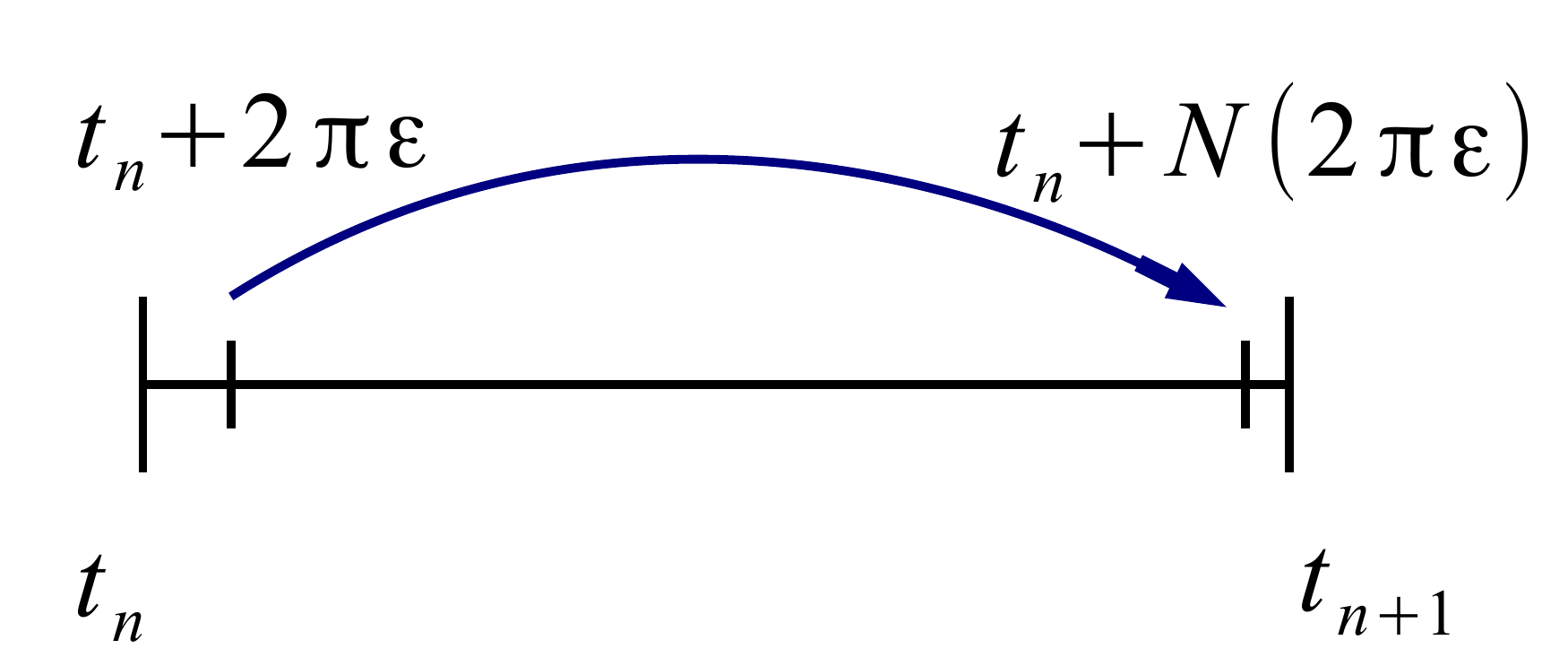}
\end{center}
\label{SchematicClassicalETDScheme}   
\end{figure}
\begin{remark}\vspace*{-6mm}
Hypothesis \eqref{assu} is satisfied if the time to make one
rapid complete tour (in the phase space) is quasi constant and close to $2\pi
\eps$ and if $\tau\mapsto E_{\eps}(\tau,\boldsymbol{y}(\tau))$ is about
$2\pi\eps$-periodic.
\end{remark}

This simple algorithm was formally proved in \cite{frenod/hirstoaga/sonnendrucker}
to provide an approximation of the solution under the assumption in \eqref{assu}.
The capability of the scheme relies on the following. When the solution is periodic,
say $\yv(t_n+2\pi\eps)=\yv_n$, the second step of the algorithm reduces to an
exact statement. Otherwise, it can happen that $\left\Vert\yv(t)\right\Vert_2=
\sqrt{R(t)^{2}+V(t)^{2}}$ vary slowly in time, typically when the electric field is
given by the Poisson equation (see \cite{lutz}). In this case, the scheme allows to
capture the small variations of the amplitude of $\yv$ since it solves them during
the first step. Nevertheless, it should be noted that the algorithm works provided
that these variations do not change considerably during the second step.

Therefore, our aim now is to improve the algorithm in order to be able to capture
also the small variations of the period. This viewpoint was outlined in
\cite{frenod/hirstoaga/sonnendrucker}. More precisely, the authors noticed that
the results are much better, especially in the Vlasov-Poisson case, when
replacing $2\pi\eps$ in Algorithm \ref{ClassicETDAlgo} by the mean of the
particles times to make the first complete tour. The so-obtained scheme was
called the {\em modified ETD algorithm}. However, as we will see in the next two
sections, this scheme still needs to be improved in order to do accurate long
time simulations. The idea is that using a more accurate period when pushing
particles leads to better results.

\subsection{Application to long time simulation of the undamped and undriven
Duffing equation} \label{sec2.1}
\begin{figure}[ht]
\begin{center}\hspace*{-8mm}
\begin{tabular}{cc}
  \includegraphics[scale=0.3]{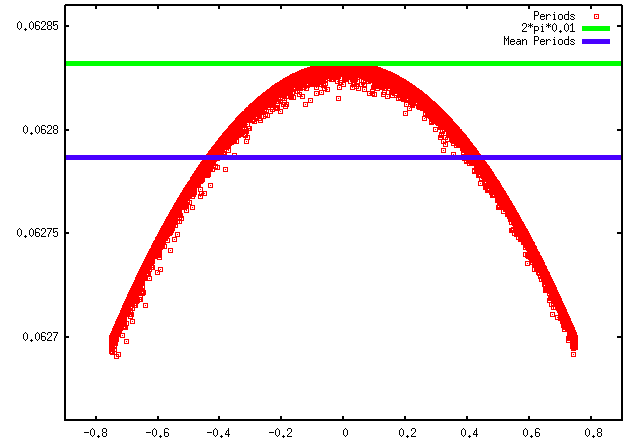} &
  \includegraphics[scale=0.3]{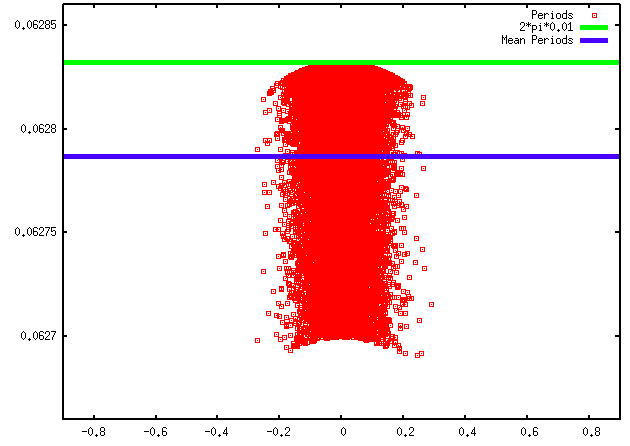}
\end{tabular}
\end{center}
\caption{In red the computed periods with respect to the initial positions (at
  left) and to the initial velocities (at right). The case of $E_{\eps}=-r^3$ with
  $\eps=0.01$. In blue the mean of the periods, in green the $2\pi\eps$ value.}
\label{DuffingBeamPeriodsFig}   
\end{figure}

In this section we consider the Vlasov equation \eqref{VlasAxSymBeam} provided
with the electric field given by \eqref{RappelForce} and with the initial
condition (see \cite{2007arXiv0710.3983F})
\begin{equation} \label{InitialConditionBeam}
  f_0(r,v)=\frac{n_0}{\sqrt{2\pi}v_{th}}\exp\left(-\frac{v^2}{2v_{th}^2}
  \right)\chi_{[-0.75,0.75]}(r),
\end{equation}
where the thermal velocity is $v_{th}=0.0727518214392$ and 
$\chi_{[-0.75,0.75]}(r)=1$ if $r\in[-0.75,0.75]$ and $0$ otherwise. In the
particle approximation, we implement this distribution function with $N_p=20000$
macroparticles and equal weights $w_i=1/N_p$. Moreover, this initial condition
will be used all along this work.

In this case, the characteristics equation or equivalently the Duffing equation
\eqref{DufDuf} has periodic solutions in time (see the appendix). Thus, the
time to make one rapid complete tour in the phase space depends only on the
initial condition. It is actually constant in time and it corresponds to the
period of the trajectory. The repartition of the periods with
respect to the initial condition in \eqref{InitialConditionBeam} is given in
Fig. \ref{DuffingBeamPeriodsFig}. These values are computed with a fine RK4
solver and they are in accordance with formula \eqref{2ndDL_T} in the appendix.
Notice that the periods are very close to $2\pi\eps$, they belong to
$(2\pi\eps-1.5\eps^2, 2\pi\eps)$.

Now we compare the modified ETD scheme with a reference solution, obtained by
solving \eqref{CharactCurvesOfGenVlas} with explicit 4th order Runge-Kutta scheme
with small time step. The solution in the phase space $(R,V)$ at different
large times, obtained with $\eps=0.01$, is illustrated in Fig.
\ref{DuffingCasePhaseSpace}.
\begin{figure}[ht]
\begin{center}\hspace*{-8mm}  
\begin{tabular}{cc}
  \includegraphics[scale=0.3]{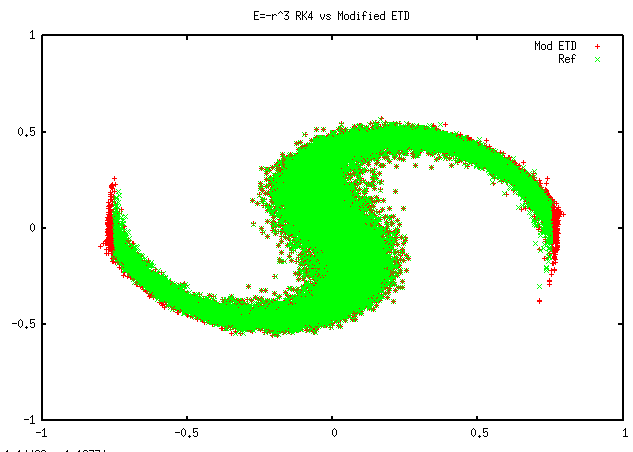} &
  \hspace*{-6mm}
  \includegraphics[scale=0.3]{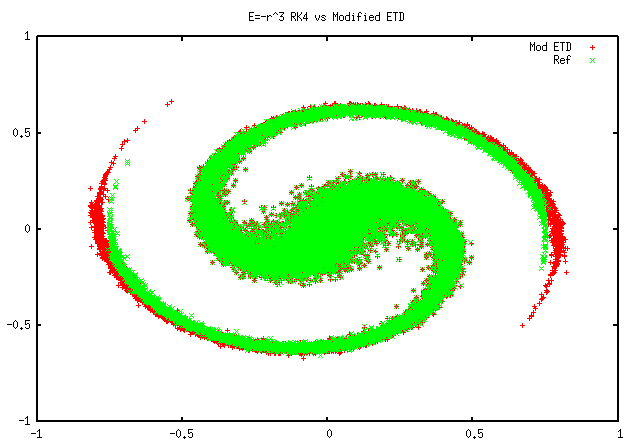}
  \\
    \includegraphics[scale=0.3]{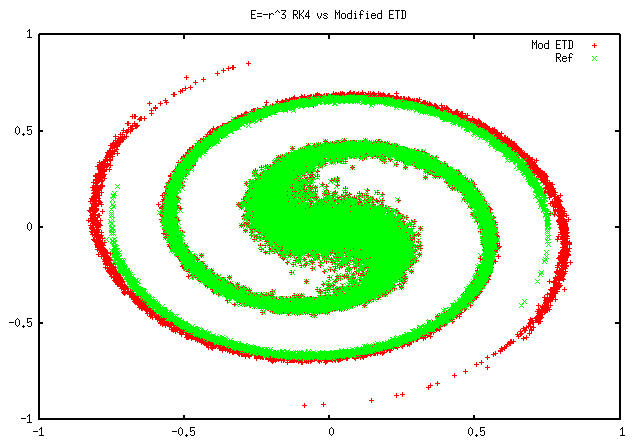} &
  \hspace*{-6mm}
  \includegraphics[scale=0.3]{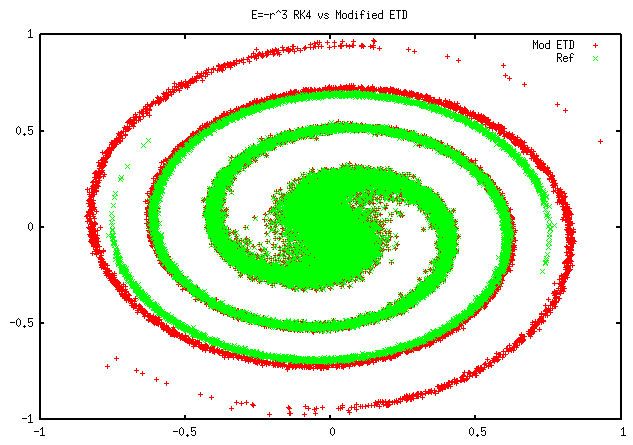}
\end{tabular}
\end{center}
\caption{ The case of $E_{\eps}=-r^3$ with $\eps=0.01$. In green: the reference
solution obtained with a Runge-Kutta algorithm with time step $\Delta t=2\pi\eps
/100$. In red: the result of the modified ETD scheme with time step 
$\Delta t=0.5$. The final times are, from left to right and from top to bottom $t=10$,
$20$, $30$ and $40$. }\label{DuffingCasePhaseSpace}  
\end{figure}

We notice that beyond $t=30$ the errors become significant. Next, we explain the
reasons. Imagining a position (or velocity) trajectory in time
being sinusoid-like, it is easy to see that a particle for which the period is
underestimated (by the mean period) will drift
inward in the phase space when applying the second step of the algorithm. When
the period is overestimated, the particle will clearly drift outward. In
addition, with the same approximation of the periods, larger is the amplitude of
the  sinusoid, bigger is the error made by the third step of the modified ETD
scheme (see Fig. \ref{ErrorsFuncAmp}). Consequently, all the particles close to
$(0,0)$ in the phase space are well treated by the algorithm, while the particles 
far from the center of the beam accumulate significant error in time. This
behaviour was noticed in reference \cite{frenod/hirstoaga/sonnendrucker} in short
time simulation, in terms of errors for particles off or on the slow manifold.
\begin{figure}[ht]
  \begin{center}\hspace*{-8mm}
    \begin{tabular}{cc}
      \includegraphics[scale=0.35]{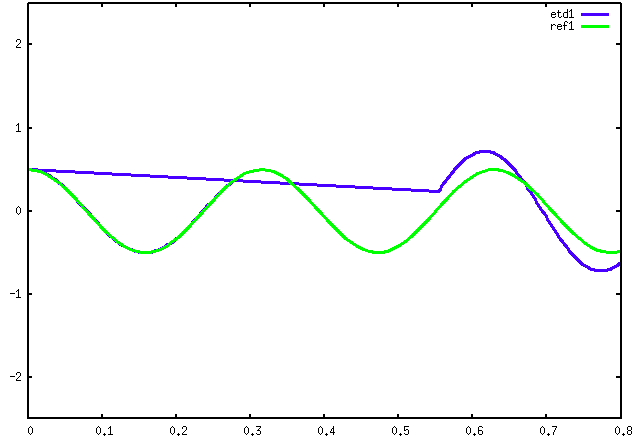} &
      \hspace*{-6mm}
      \includegraphics[scale=0.35]{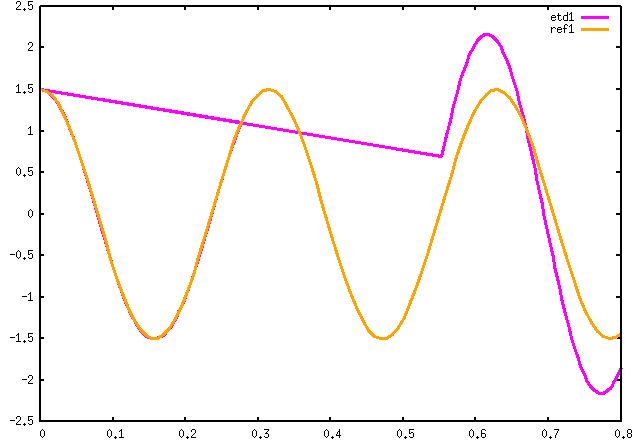}
    \end{tabular}
  \end{center}
  \caption{In green : the amplitude of the exact solution is $0.5$ and the period
    is $2\pi\eps$. In blue the solution obtained with the ETD Scheme with a
    period $2\pi\eps-15\eps^2$. In orange : the amplitude of the exact solution
    is $1.5$ and the period is $2\pi\eps$. In purple the solution obtained with
    the ETD Scheme with a period $2\pi\eps-15\eps^2$. } 
  \label{ErrorsFuncAmp}   
\end{figure}

\subsection{Application to long time simulation of the Vlasov-Poisson test case}
\label{sec2.3}

In this section we consider the Vlasov-Poisson equation
\eqref{VlasPoissAxSymBeam} provided with the initial condition
\eqref{InitialConditionBeam}. The calculus done in the appendix for computing
periods is now difficult to achieve. Therefore, in this case, we compute the
time to make one fast
tour by using a fine RK4 solver. The Poisson equation is simply solved by means
of the trapezoidal formula for the integral in $r$, with $256$ cells.
As in the previous section, the fast time depends on the initial condition but
in addition, it changes slowly in time. The same happens for the amplitude
$\left\Vert \yv \left(t\right)\right\Vert _{2}=\sqrt{R(t)^{2}+V(t)^{2}}$, see
Figs. \ref{VP_Periods_Rep_Evol} and \ref{VP_Modulus_Rep_Evol}.

The comparison between the modified ETD scheme and a reference solution obtained
with explicit 4th order Runge-Kutta scheme with small time step is summarized in
Fig. \ref{VPCasePhaseSpace}. We see from Figs. \ref{DuffingCasePhaseSpace} and
\ref{VPCasePhaseSpace} that the dynamics and/or the filamentation in the
Vlasov-Poisson case is slower than that in the case of $E_{\eps}$ given by
\eqref{RappelForce}. Certainly, this is because of bigger particle periods
(compare Figs. \ref{DuffingBeamPeriodsFig} and  \ref{Init_VP_Periods_Rep})
\begin{figure}[ht]
\begin{center}
  \includegraphics[scale=0.5]{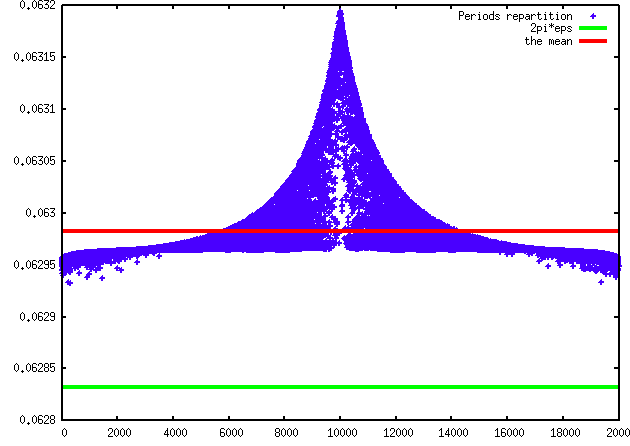}
\end{center}
\caption{In blue the computed periods with respect to the initial positions, in
the Vlasov-Poisson case with $\eps=0.01$. In red the mean of the periods, in green
the $2\pi\eps$ value.} \label{Init_VP_Periods_Rep}
\end{figure}
\begin{figure}[ht]
\begin{center}\hspace*{-8mm}  
\begin{tabular}{cc}
  \includegraphics[scale=0.3]{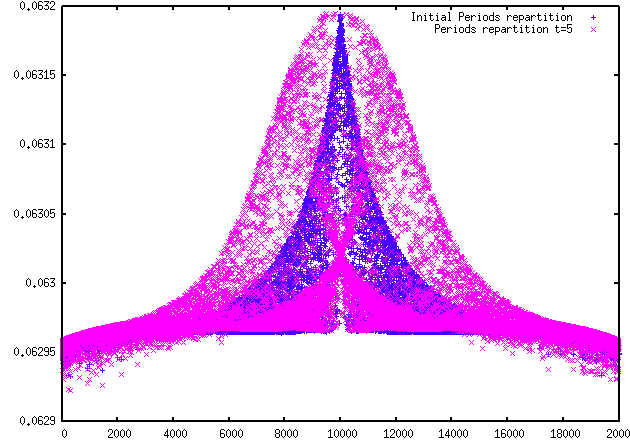} &
  \hspace*{-6mm}
  \includegraphics[scale=0.3]{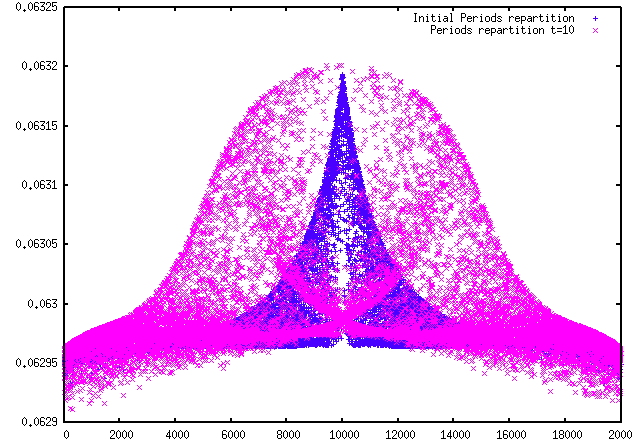}
  \\
    \includegraphics[scale=0.3]{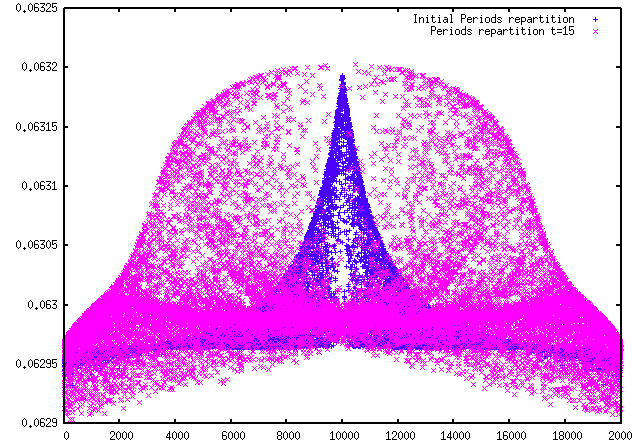} &
  \hspace*{-6mm}
  \includegraphics[scale=0.3]{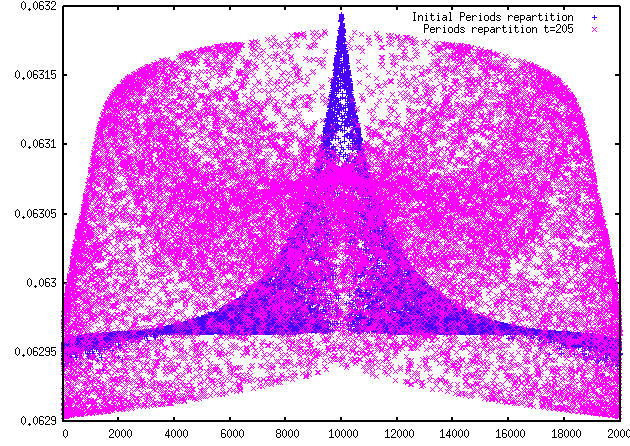}
\end{tabular}
\end{center}
\caption{The periods distribution with respect to the initial periods (in blue)
  at times $t=5, 10, 15$, and $20$. The Vlasov-Poisson case with $\eps=0.01$.}
\label{VP_Periods_Rep_Evol}  
\end{figure}
\begin{figure}[ht]
\begin{center}\hspace*{-8mm}  
\begin{tabular}{cc}
  \includegraphics[scale=0.3]{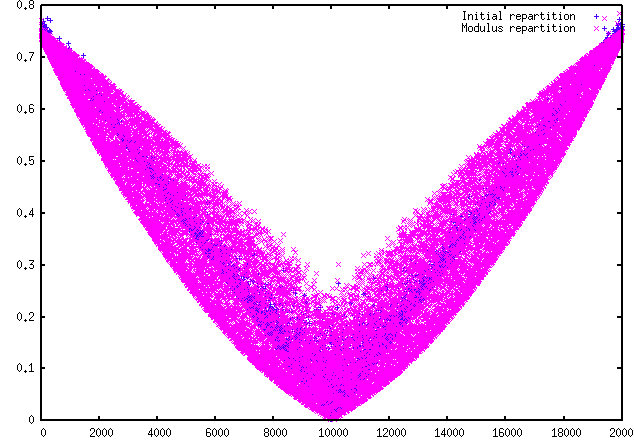} & 
  \hspace*{-6mm}
  \includegraphics[scale=0.3]{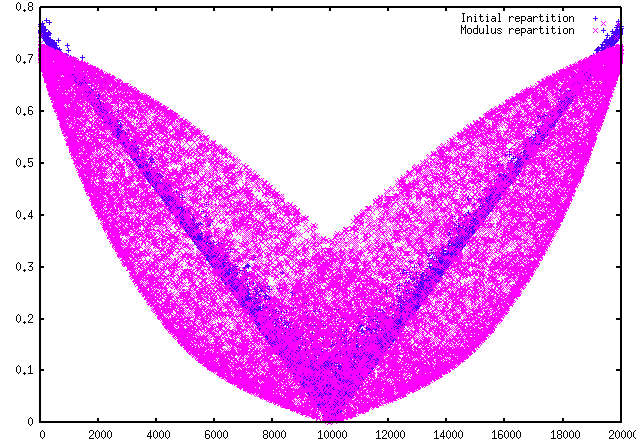} 
  \\
    \includegraphics[scale=0.3]{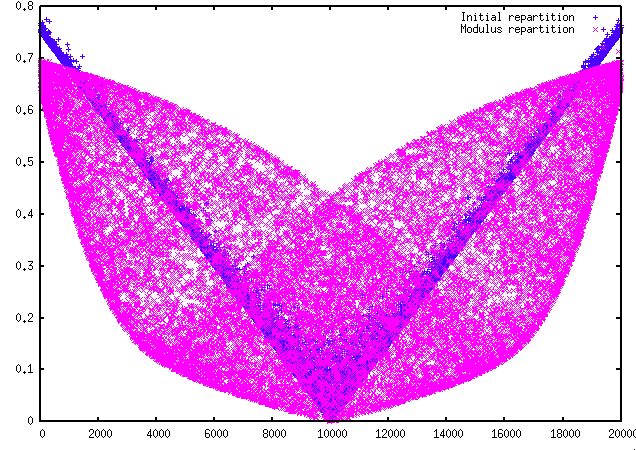} & 
  \hspace*{-6mm}
  \includegraphics[scale=0.3]{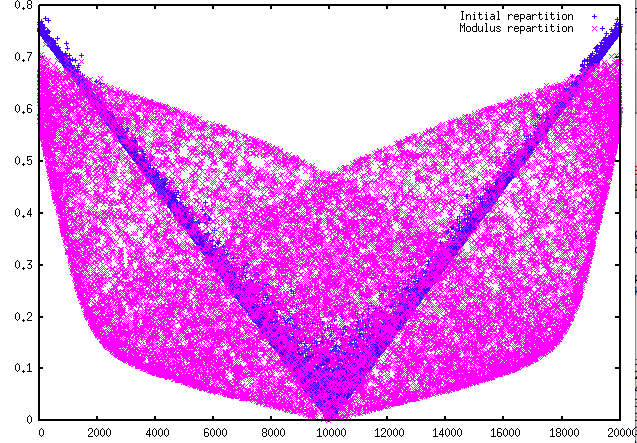} 
\end{tabular}
\end{center}
\caption{The modulus $\left\Vert \left(R\left(t\right),V\left(t\right)\right)
  \right \Vert _{2}$ with respect to the initial modulus (in blue) at times
  $t=5, 10, 15$, and $20$. The Vlasov-Poisson case with $\eps=0.01$. }
\label{VP_Modulus_Rep_Evol}  
\end{figure}
\begin{figure}[ht]
  \begin{center}\hspace*{-8mm}  
    \begin{tabular}{cc}
      \includegraphics[scale=0.3]{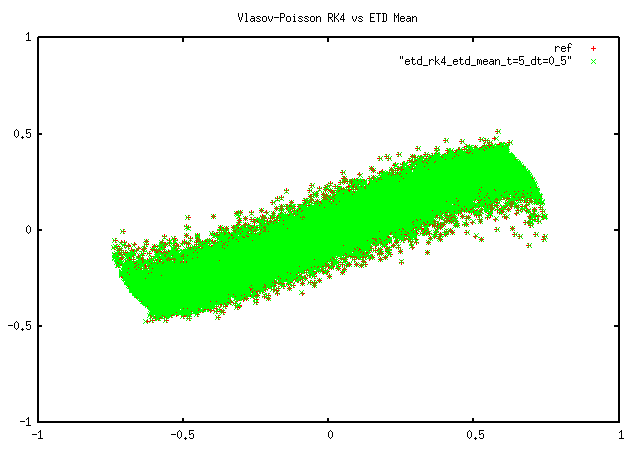} &
      \hspace*{-6mm}
      \includegraphics[scale=0.3]{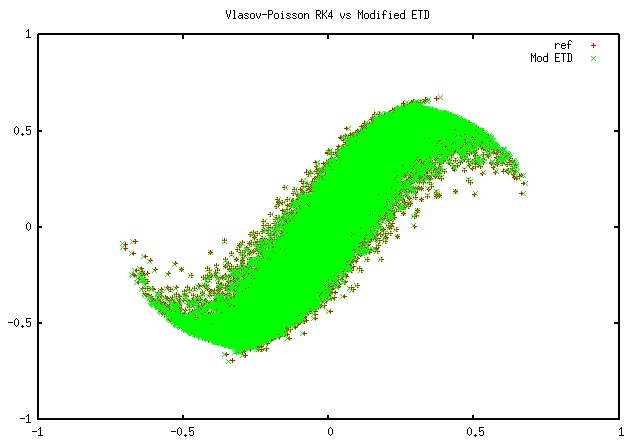} \\
    \includegraphics[scale=0.3]{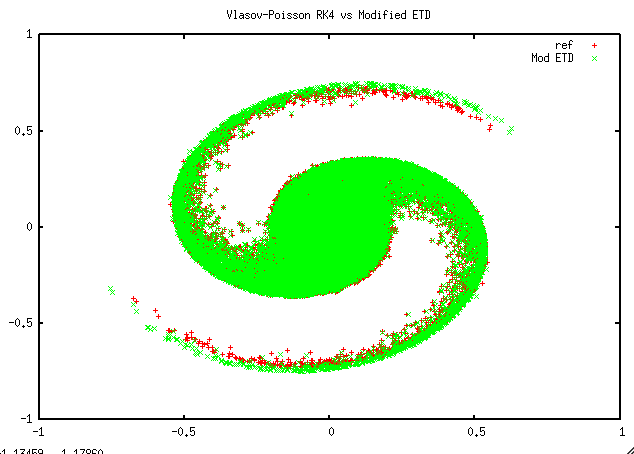} &
    \hspace*{-6mm}
    \includegraphics[scale=0.3]{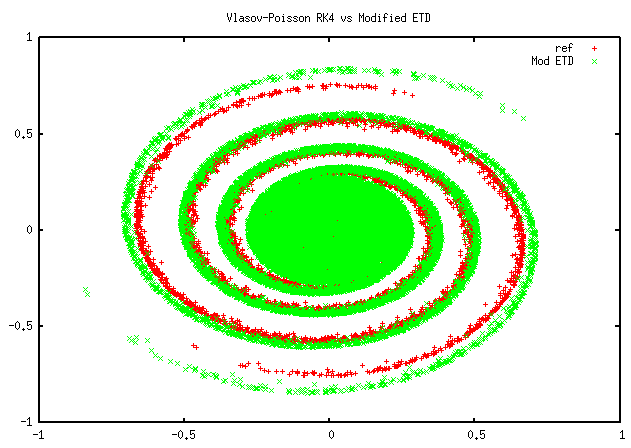}
    \end{tabular}
  \end{center}
  \caption{Vlasov-Poisson test case with $\eps=0.01$. In red: the reference
    solution obtained with a Runge-Kutta algorithm with time step $\Delta t=2\pi
    \eps/100$. In green: the result of the modified ETD scheme with time step
    $\Delta t=0.5$. The final times are, from left to right and from top to
    bottom: $t=5$, $10$, $30$ and $60$. }
  \label{VPCasePhaseSpace}  
\end{figure}

Now, we can recall the discussion from the previous section about the
consequences of using approximated period within the second and the third steps
of the algorithm. However, in this Vlasov-Poisson case, the periods distribution
is more complicated and thus, it seems more difficult to clearly state how the
approximation influences long time simulation. We only remark the following.
The error done for particles at the extremities of the beam is less significant
than in the previous Vlasov case. The reason is that 
the mean of the initial periods is very close to the periods of the particles
far from the center of the beam (see Figs. \ref{Init_VP_Periods_Rep} and
\ref{VP_Periods_Rep_Evol}). As for those close to the center $(0,0)$, we
can see in Fig. \ref{VPCasePhaseSpace} that particles slightly drifting both
outward and inward are present, as expected from Fig. \ref{Init_VP_Periods_Rep}.

Therefore, in the next section we will compute at each time step the period of
each particle, in order to take into account this slow evolution of the periods
distribution. We recall that the slow evolution of the amplitudes is considered
when applying the first step of the algorithm.

\section{The new time-stepping scheme}
\label{Section3}  





In this section, we change Algorithm \ref{ClassicETDAlgo} by improving the
approximation of each particle's time to make one rapid complete tour.
More precisely, within the first step, instead of solving the ODE
\eqref{CharactCurvesOfGenVlas} during some fixed fast time for all the particles,
we will push each particle
with its own fast time. Unfortunately, implementing this idea in Algorithm
\ref{ClassicETDAlgo} will not work in the Vlasov-Poisson case. The reason is the
following. Suppose that we have the particles $(\yv_i)_{\inp}$ and their computed
periods $(T_i)_{\inp}$ at time $t_n$. First, we finely solve the ODE with each
particle as initial condition, during different times (the $T_i$s), paying 
attention to push enough in time (the maximum period) all the
particles in order to calculate the correct self-consistent electric field.
Then, applying the second step of Algorithm \ref{ClassicETDAlgo} with the
corresponding $N_i$ will lead to particles at different times $t_n+N_i\,T_i$.
Now, it is impossible to apply the third step of the algorithm, because the
particles arrived at the minimum of $\big(t_n+N_i\,T_i\big)_{\inp}$ can not be
pushed with the right electric field since all the particles are not
available at that time. This problem is solved by changing the steps of the
algorithm, as it is described below, in Algorithm \ref{ETDFSSModAlgo}.

Firstly, note that dynamical system \eqref{CharactCurvesOfGenVlas} can be
rewritten as 
\begin{align}
   &\boldsymbol{y}'\left(t\right)=\frac{1}{\eps}M\boldsymbol{y}\left(t\right)
  +\boldsymbol{L}^{\eps}\left(t,\boldsymbol{y}\left(t\right)\right),
   \label{StiffODEForBeam}   
\end{align}
where $\boldsymbol{y}$ is defined by \eqref{SimplNotations} and $M$ and
$\boldsymbol{L}^{\eps}$ are given by
\begin{equation}
M=\left(\begin{array}{cc}
~0 & 1\\
-1 & 0\end{array}\right)
\quad \text{and} \quad
\boldsymbol{L}^{\eps}=\left(\begin{array}{c}
0\\
E_{\eps}\end{array}\right).
\end{equation}
Let $T$ be a positive real number which is intended to be close to $2\pi$ and
let $\boldsymbol{r}\left(\tau\right)$ be the matrix defined by
\begin{align}
  &\boldsymbol{r}\left(\tau\right)=\left(\begin{array}{cc}
      ~~\cos\big(\frac{2\pi}{T}\tau\big) & \sin\big(
      \frac{2\pi}{T}\tau\big)\\
      -\sin\big(\frac{2\pi}{T}\tau\big) & \cos\big(
      \frac{2\pi}{T}\tau\big)\end{array}\right),
  \label{TRotationDef}  
\end{align}
which is a $T$-periodic function. Then we compute
\begin{eqnarray}
  \frac{\text{d}}{\text{d}\tau}\left(\boldsymbol{r}\left(-\frac{\tau}{\eps}
  \right) \boldsymbol{y}\left(\tau\right)\right)
  &=& \frac{2\pi}{\eps T}\boldsymbol{r}\left(-\frac{\tau}{\eps}\right)M
  \boldsymbol{y}\left(\tau\right)+\boldsymbol{r}\left(
  -\frac{\tau}{\eps}\right)\left(\frac{1}{\eps}M\boldsymbol{y}\left(t\right)+
  \boldsymbol{L}^{\eps}\left(\tau,\boldsymbol{y}\left(\tau\right)\right)\right)
  \nonumber \\
   &=&\boldsymbol{r}\left(-\frac{\tau}{\eps}\right)\boldsymbol{L}^{\eps}\left(
  \tau,\boldsymbol{y}\left(\tau\right)\right)+\left(1-\frac{2\pi}{T}\right)
  \frac{1}{\eps}\boldsymbol{r}\left(-\frac{\tau}{\eps}\right)M\boldsymbol{y}
  \left(\tau\right) \nonumber  \\
   &=&\boldsymbol{r}\left(-\frac{\tau}{\eps}\right)\boldsymbol{\beta}^{\eps}
  \left(\tau,\boldsymbol{y}\left(\tau\right)\right),
\label{13Feb}  
\end{eqnarray}
where 
\begin{equation}  \label{DefBetatau}  
  \boldsymbol{\beta}^{\eps}\left(\tau,\boldsymbol{y}\left(\tau\right)\right)=
  \boldsymbol{L}^{\eps}\left(\tau,\boldsymbol{y}\left(\tau\right)\right)+\left(
  1-\frac{2\pi}{T}\right)\frac{1}{\eps}M\boldsymbol{y}\left(\tau\right).
\end{equation}
Integrating \eqref{13Feb} between $s$ and $t$ with $s<t$ thus leads to
\begin{align}
     \boldsymbol{y}\left(t\right)=\boldsymbol{r}\left(\frac{t-s}{\eps}\right)\boldsymbol{y}\left(s\right)+\boldsymbol{r}\left(\frac{t-s}{\eps}\right)\int_{s}^{t}\boldsymbol{r}\left(\frac{s-\tau}{\eps}\right)\boldsymbol{\beta}^{\eps}\left(\tau,\boldsymbol{y}\left(\tau\right)\right)d\tau.
     \label{ExactIntegralFormula}  
\end{align}
Now we establish the time-stepping scheme. We write equation
\eqref{ExactIntegralFormula} with $s=t_n$ and $t=t_{n+1}=t_n+\Delta t$ in order
to specify how the solution is computed at time $t_{n+1}$ from its known value at
time $t_n$. Thus, for each particle of the beam we are faced with the numerical
computation of the integral from $t_n$ to $t_{n+1}$ involved in the right hand
side of \eqref{ExactIntegralFormula}. Subsequently, we denote by $\yv_i$ the
particle satisfying equation \eqref{StiffODEForBeam} provided with the initial
condition $\yv_i^n$. We denote by $T_i^n$ the time to make the first rapid
complete tour starting at $t_n$. This time is computed numerically.\\

\noindent Since we want to build a scheme with a time step $\Delta t$ much larger
than the fast oscillation, we first need to find the unique positive integers
$N_i^n$ and the unique reals $o_i^n\in[0,T_i^n)$ such that
\begin{equation}\label{TimeStepDec}    
  \Delta t=N_i^nT_i^n+o_i^n.
\end{equation}
The derivation of the scheme, Algorithm \ref{ETDFSSModAlgo}, is based on the
following approximation.

\begin{approximation}
\label{MainApprox}
We have for any $\inp$
\begin{equation}\label{Approx1}
  \int_{t}^{t+N_i^nT_i^n}\boldsymbol{r}^n_{i}\left(\frac{t-\tau}{\eps}\right)
  \boldsymbol{\beta}^{\eps}\left(\tau,\boldsymbol{y}_{i}\left(\tau\right)\right)
  d\tau\;\approx\; N_{i}^{n}\int_{t}^{t+T_i^{n}}\boldsymbol{r}^n_{i}\left(
  \frac{t-\tau}{\eps}\right)\boldsymbol{\beta}^{\eps}\left(\tau,\yv_i\left(
  \tau\right)\right)d\tau
\end{equation}
where $t=t_n+o_i^n$ and $\rv_i^n$ corresponds to the matrix in
\eqref{TRotationDef} with $T=T_i^n/\eps$.
\end{approximation}

\begin{remark}
Approximation \ref{MainApprox} is valid if we make the assumptions that the
times for all particles to do the first rapid tour starting from $t$ 
evolve slowly in time and that the particle and the electric field evaluated at
the particle position are quasi-periodic in time
(with a period close to the time to make the first rapid complete tour).
\end{remark}
\begin{lemma}
\label{FirstStepOfTheAlgLemma}   
Under Approximation \ref{MainApprox} we obtain for any $\inp$ and any time $t_n$
\begin{equation}
  \yv_i(t_{n+1}) \approx  \boldsymbol{y}_i(t_n+o_i^n) +
  N_i^n \Big(\yv_i({t_n+o_i^n+T_i^n}) - \boldsymbol{y}_i(t_n+o_i^n) \Big),
  \label{MainFormula1}  
\end{equation}
where $\yv_i$ is the solution to equation \eqref{StiffODEForBeam} with a given
initial condition $\yv_i(t_n)=\yv_i^n$.
\end{lemma}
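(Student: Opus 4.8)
The plan is to combine the exact representation \eqref{ExactIntegralFormula} with Approximation \ref{MainApprox}, using the elementary fact that the rotation $\boldsymbol{r}^n_i$ returns to the identity over any whole number of periods $T_i^n$. Indeed, with the normalisation $T=T_i^n/\eps$ fixed after \eqref{Approx1}, the matrix $\boldsymbol{r}^n_i\big(\tfrac{\sigma}{\eps}\big)$ is the planar rotation through the angle $2\pi\sigma/T_i^n$, so it equals the identity precisely when $\sigma$ is an integer multiple of $T_i^n$. This single remark drives the whole argument, and I would write $t=t_n+o_i^n$ throughout, matching the quantity fixed in Approximation \ref{MainApprox}.

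First I would apply \eqref{ExactIntegralFormula} between the instants $t$ and $t_{n+1}=t+N_i^nT_i^n$, the second equality being exactly the time-step decomposition \eqref{TimeStepDec}. Since $\boldsymbol{r}^n_i\big(\tfrac{t_{n+1}-t}{\eps}\big)=\boldsymbol{r}^n_i\big(\tfrac{N_i^nT_i^n}{\eps}\big)$ is a rotation through $2\pi N_i^n$, it is the identity, so the prefactor in front of the integral disappears and we are left with
\[
  \yv_i(t_{n+1}) = \yv_i(t) + \int_{t}^{t+N_i^nT_i^n}\boldsymbol{r}^n_i\Big(\tfrac{t-\tau}{\eps}\Big)\boldsymbol{\beta}^{\eps}\big(\tau,\yv_i(\tau)\big)\,d\tau .
\]
At this point I would invoke Approximation \ref{MainApprox}, which is stated precisely for this $t$ and this $\boldsymbol{r}^n_i$, to replace the integral over the long interval by $N_i^n$ copies of the one-period integral from $t$ to $t+T_i^n$.

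It then remains to recognise the one-period integral as a difference of endpoint values, and for this I would apply \eqref{ExactIntegralFormula} a second time, now between $t$ and $t+T_i^n$. Here $\boldsymbol{r}^n_i\big(\tfrac{T_i^n}{\eps}\big)$ is a rotation through $2\pi$, again the identity, so the representation reduces to
\[
  \yv_i(t+T_i^n) = \yv_i(t) + \int_{t}^{t+T_i^n}\boldsymbol{r}^n_i\Big(\tfrac{t-\tau}{\eps}\Big)\boldsymbol{\beta}^{\eps}\big(\tau,\yv_i(\tau)\big)\,d\tau .
\]
Solving this for the integral and substituting into the previous display gives $\yv_i(t_{n+1})\approx\yv_i(t)+N_i^n\big(\yv_i(t+T_i^n)-\yv_i(t)\big)$, which is exactly \eqref{MainFormula1} upon recalling $t=t_n+o_i^n$.

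I do not expect a genuine obstacle here: every piece of analytic content is packaged into Approximation \ref{MainApprox}, and what is left is the purely algebraic telescoping above. The only point demanding care is the periodicity normalisation, namely that $\boldsymbol{r}^n_i$ is built with $T=T_i^n/\eps$ so that its argument is measured in the same units as $(t_{n+1}-t)/\eps$ and $T_i^n/\eps$; it is exactly the vanishing of the two rotation prefactors that makes both applications of \eqref{ExactIntegralFormula} collapse to the stated form.
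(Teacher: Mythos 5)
Your proposal is correct and follows essentially the same route as the paper's proof: apply the variation-of-constants formula \eqref{ExactIntegralFormula} from $s=t_n+o_i^n$ to $t_{n+1}$ (where the prefactor $\rv_i^n(N_i^nT_i^n/\eps)$ is the identity), insert Approximation \ref{MainApprox}, and then apply \eqref{ExactIntegralFormula} once more over the single period $[t_n+o_i^n,\,t_n+o_i^n+T_i^n]$ (where $\rv_i^n(T_i^n/\eps)$ is the identity) to identify the one-period integral with $\yv_i(t_n+o_i^n+T_i^n)-\yv_i(t_n+o_i^n)$. The paper's proof is exactly this telescoping, so there is nothing to add.
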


\begin{proof}
Writing the variation-of-constants formula from $s=t_n+o_i^n$ to $t=t_{n+1}$ and
recalling \eqref{TimeStepDec} we obtain the following exact scheme
\begin{equation}\label{thescheme}
\yv_i(t_{n+1})=\yv_i(t_n+o_i^n) + \int_{t_n+o_i^n}^{t_{n+1}}
\rv_i^n\left(\frac{t_n+o_i^n-\tau}{\eps}
\right)\,\boldsymbol{\beta}^{\eps}\left(\tau,\yv_i\left(\tau\right)\right)d\tau
\end{equation}
since $\rv_i^n(N_i^nT_i^n/\eps)$ is the identity matrix. Then, under Approximation
\ref{MainApprox} we obtain 
\begin{equation}\label{rel}
\yv_i(t_{n+1}) \approx \yv_i(t_n+o_i^n) + N_i^n \int_{t_n+o_i^n}^{t_n+o_i^n+T_i^{n}}
\rv_i^n \left( \frac{t_n+o_i^n-\tau}{\eps}\right)\boldsymbol{\beta}^{\eps}\left(
\tau, \yv_i\left( \tau\right)\right)d\tau.
\end{equation}
Applying now formula \eqref{ExactIntegralFormula} with $s=t_n+o_i^n$, $t=
t_n+o_i^n+T_i^n$ and $\rv= \rv_i^n$ yields
\begin{equation}
  \yv_{i}\left(t_{n}+o_{i}^{n}+T_{i}^{n}\right)-\yv_{i}\left(t_{n}+o_i^n\right)
  = \int_{t_n+o_i^n}^{t_n+o_i^n+T_i^{n}}
  \rv_i^n \left( \frac{t_n+o_i^n-\tau}{\eps}\right)\boldsymbol{\beta}^{\eps}\left(
  \tau, \yv_i\left( \tau\right)\right)d\tau
  \label{13Juillet2013F1}    
\end{equation}
since $\rv_i^n(T_i^n/\eps)$ is the identity matrix. Thus, injecting \eqref{13Juillet2013F1}
in \eqref{rel} leads to \eqref{MainFormula1}.
\end{proof}

Using Lemma \ref{FirstStepOfTheAlgLemma}, we deduce the following algorithm to
compute $\yv_i^{n+1}$ from $\yv_i^n$.
\begin{algorithm}
\label{ETDFSSModAlgo}
Assume that for each $\inp$,  $\yv_i^n$, the value of a solution of
\eqref{StiffODEForBeam} at time $t_n$, is given.
\begin{enumerate}
\item Compute from $(\yv_i^n)_{\inp}$ the periods $(T_i^n)_{\inp}$ at time $t_n$.
\item For each $\inp$ compute the unique positive integers $N_i^n$ and the unique
reals $o_i^n\in[0,T^n_i)$ such that
\begin{equation}
  \Delta t = N^{n}_{i} T^n_i+o^{n}_{i}.
\end{equation}
\item For each $\inp$ compute $\yv_i(t_n+o^n_i)$ and $\yv_i(t_n+o_i^n+T_i^n)$ by
using a fine Runge-Kutta solver with initial condition $\yv_i^n$.
\item For each $\inp$ compute an approximation of $\yv_i(t_{n+1})$ thanks to
\begin{equation}
  \yv_i^{n+1}=\yv_i(t_{n}+o^n_{i})+N_{i}\Big(\yv_i(t_{n}+o_i^n+T_i^n)-
  \yv_i(t_{n}+o_{i}^n)\Big).
\end{equation}
\begin{figure}[h]
\begin{center}
\begin{tabular}{cc}
\includegraphics[scale=0.5]{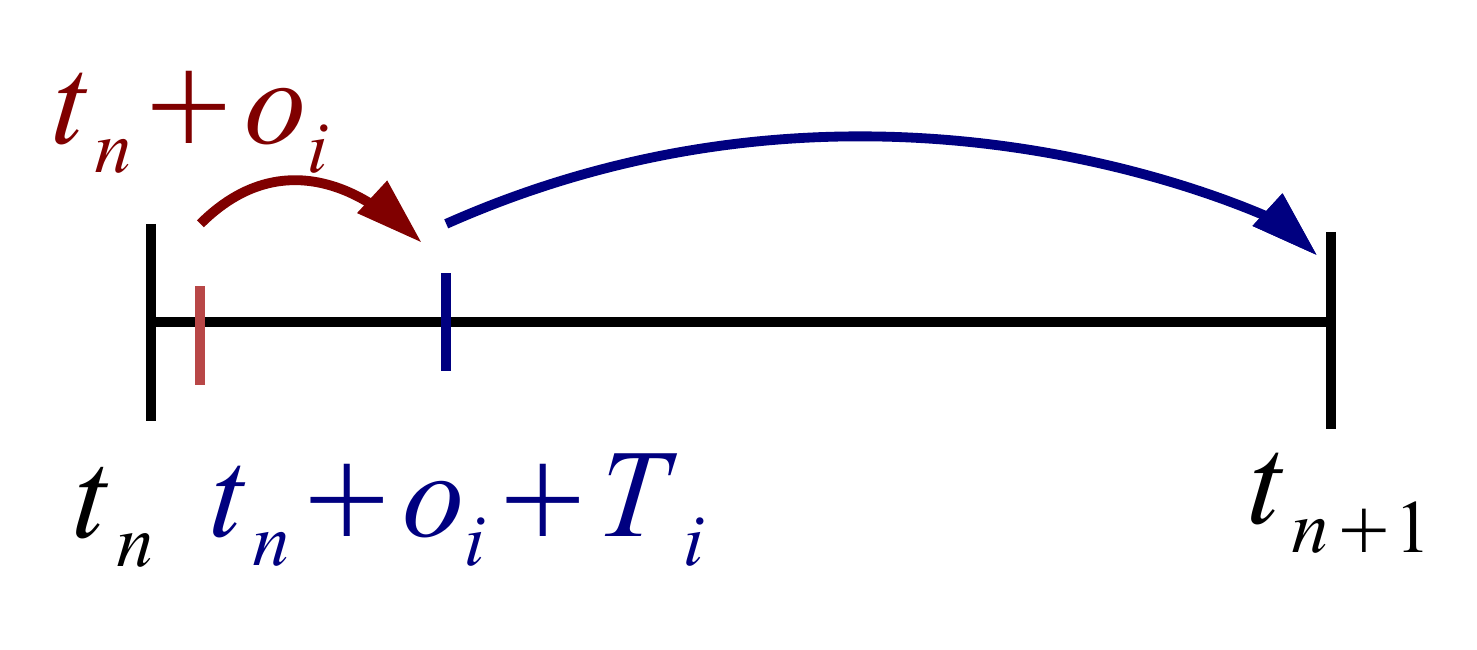} 
\end{tabular}
\end{center}
\end{figure}
\end{enumerate}
\end{algorithm}

\begin{remark}
\label{CompOfThePeriodsRem}  
As in \cite{frenod/hirstoaga/sonnendrucker}, the way to compute the $(T_i^n)_i$
from $(\yv_i^n)_i$ is the following. First, we solve the characteristics of
equation \eqref{CharactCurvesOfGenVlas} with a time step 
$(\Delta t)_{\!\text{RK}}=2\pi\eps/100$, for all initial conditions $\yv_i^n$ 
until all the particles reach their trajectory's third extremum. The criterion
for finding these extrema is the velocity's change of sign. We then state that
each particle's period is the time interval between the first and the third
extremum.
\end{remark}
\begin{remark}
\label{QuadraticInterpolationRem}
The time step $(\Delta t)_{\!\text{RK}}$ chosen in the third step of the
algorithm is $2\pi\eps/100$. It is easy to see that the reals $(t_n+o_i^n)_i$ and
$(t_n+o_i^n+T_i^n)_i$ are located on a few cells of length $(\Delta
t)_{\!\text{RK}}$. Thus, we approximate the values $\yv_i(t_n+o_i^n)$ and
$\yv_i(t_n+o_i^n+T_i^n)$ by quadratic interpolations.
\end{remark}


\section{Numerical simulations}
\label{Section4}  

We now do long time simulations with the {\em improved ETD scheme} (Algorithm
\ref{ETDFSSModAlgo}) in the two cases of Vlasov equation. The results show that
the new scheme works in long times while the modified ETD scheme becomes
unstable. We can observe the idea underlined from the begining of the paper:
using all along the simulation accurate approximations of each particle period
leads to very accurate solutions. Thus, we note that the numerical solutions in
the first case (Section \ref{sec4.1}) are very good since very precise values
for the periods are available and in addition they do not change in time. On the
contrary, in the Vlasov-Poisson case the periods are numerically approximated
at several levels and therefore the scheme does not perform as well as in the
first case.

\subsection{Long time simulation of the undamped and undriven Duffing equation}
\label{sec4.1}

In this section we consider the Vlasov equation \eqref{VlasAxSymBeam}
provided with the electric field given by \eqref{RappelForce} and with the
initial condition \eqref{InitialConditionBeam}. We recall that we use $20000$
macroparticles with equal weights.
\begin{figure}[ht]
\begin{center}\hspace*{-8mm}  
\begin{tabular}{cc}
  \includegraphics[scale=0.3]{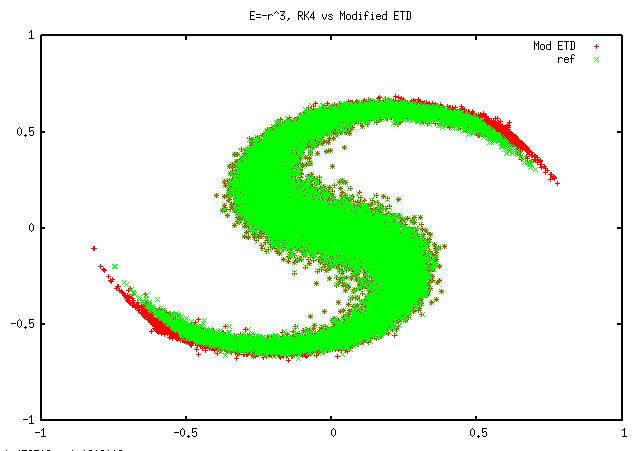} &
  \hspace*{-6mm}
  \includegraphics[scale=0.3]{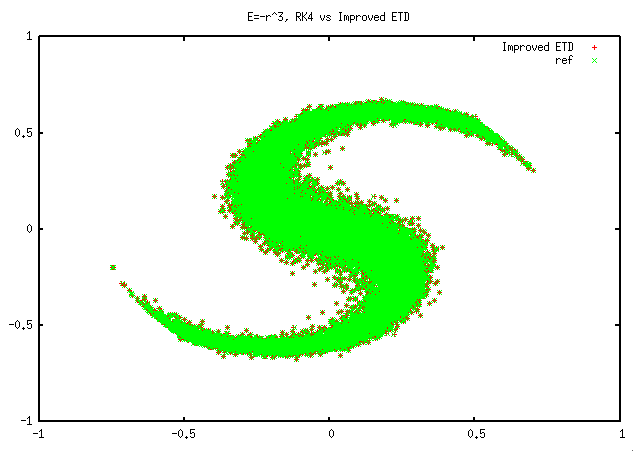}
  \\
  \includegraphics[scale=0.3]{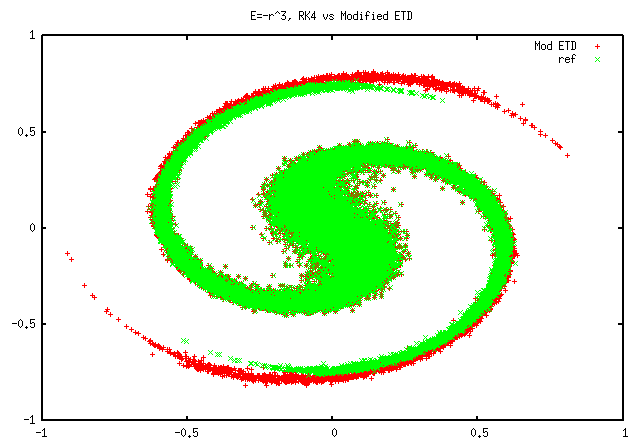} &
  \hspace*{-6mm}
  \includegraphics[scale=0.3]{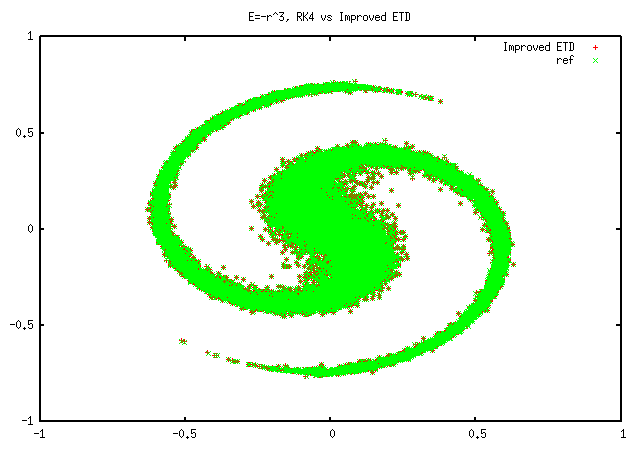}
  \\
    \includegraphics[scale=0.3]{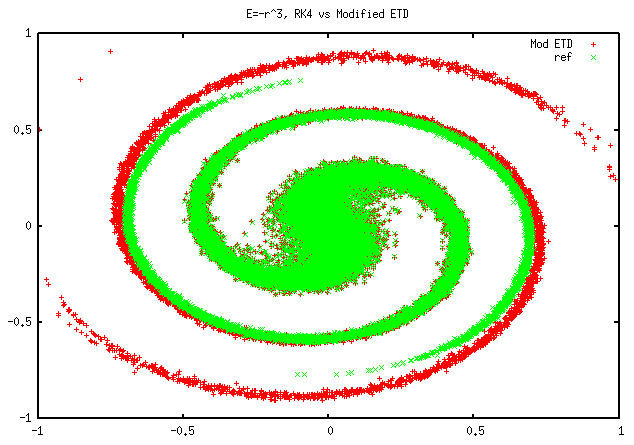} &
  \hspace*{-6mm}
  \includegraphics[scale=0.3]{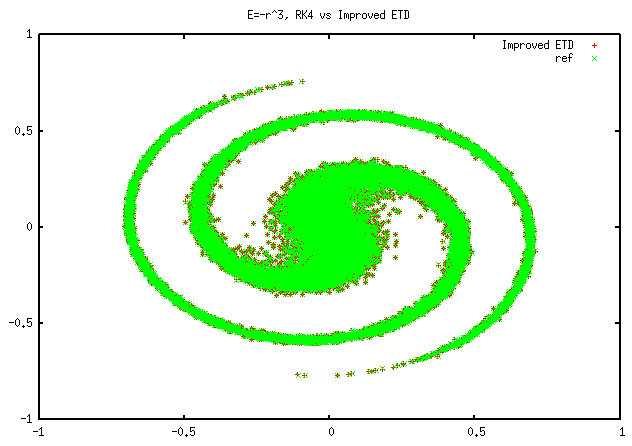}
    \\
  \includegraphics[scale=0.3]{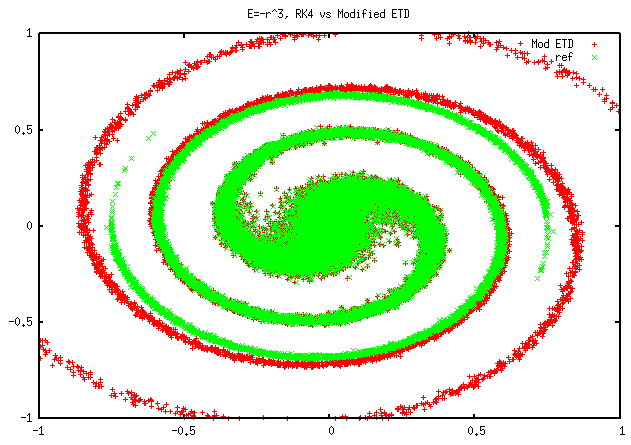} &
  \hspace*{-6mm}
  \includegraphics[scale=0.3]{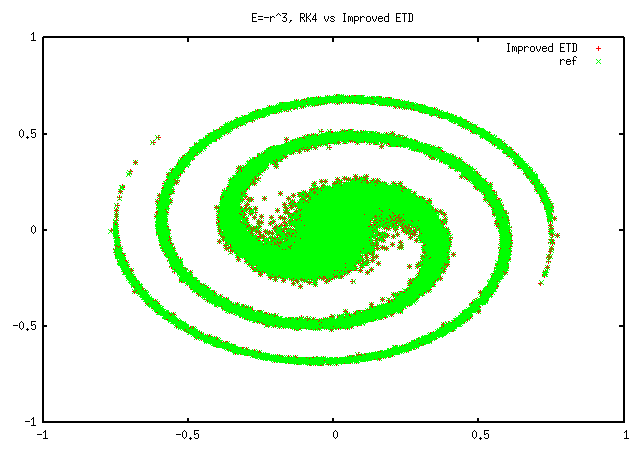}
\end{tabular}
\end{center}
\caption{The case of $E_{\eps}=-r^3$ with $\eps=0.001$. 
In green: the reference solution. In red: the solution obtained with the modified
ETD scheme (\textit{left}) and with the improved ETD scheme (\textit{right}).
From top to bottom : $t=10$, $t=20$, $t=30$ and $t=37$.
The time step used for the ETD schemes is $\Delta t=0.5$.}
\label{DuffCasePhaseSpaceLTImp}  
\end{figure}
Note that the first step of Algorithm \ref{ETDFSSModAlgo} is executed only
initially, since the particle periods are constant in time.

The results obtained with $\eps=0.001$ are summarized in Fig.
\ref{DuffCasePhaseSpaceLTImp} and with $\eps=0.0001$ in Fig. \ref{VPlast} at
right: we compare the outcome of the algorithm with a macroscopic time step
$\Delta t=0.5$ to a reference solution computed with a small time step,
$2\pi\eps/100$. As already mentioned in Section \ref{sec2.1} (see also the
appendix), the particles moving according to \eqref{CharactCurvesOfGenVlas}
with given $E_{\eps}$, have periodic trajectories and thus, the periods depend
only on the initial conditions. We observe on Figs.
\ref{DuffCasePhaseSpaceLTImp} and \ref{VPlast} that both the filamentation
phenomena and the fast rotation are well treated in long times by the
improved ETD scheme (the old scheme is unstable at these times).
Notice also that when $\eps=0.0001$, the integers $N^n_i$ are about $800$,
meaning that the ETD scheme is about $800$ times faster than the reference
solution.

\subsection{Long time simulation of the Vlasov-Poisson test case}
\label{LongTimeSimVPWithImprovedETDScheme}    

In this section we consider the Vlasov-Poisson equation \eqref{VlasPoissAxSymBeam}
provided with the initial condition \eqref{InitialConditionBeam}. The Poisson
equation is solved as mentioned in Section \ref{sec2.3}.
\begin{figure}[ht]
\begin{center}\hspace*{-8mm}  
\begin{tabular}{cc}
  \includegraphics[scale=0.3]{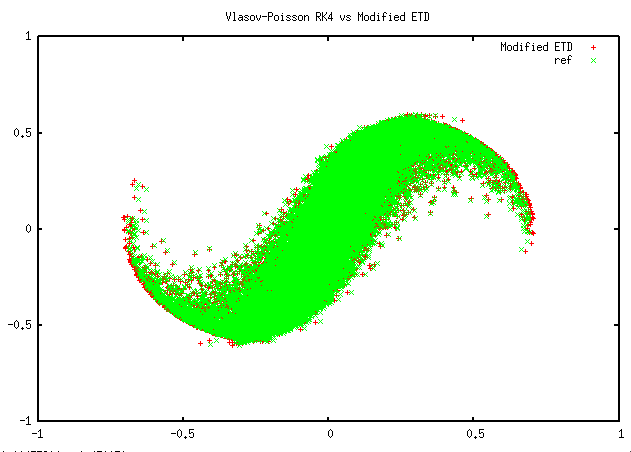} &
  \hspace*{-6mm}
  \includegraphics[scale=0.3]{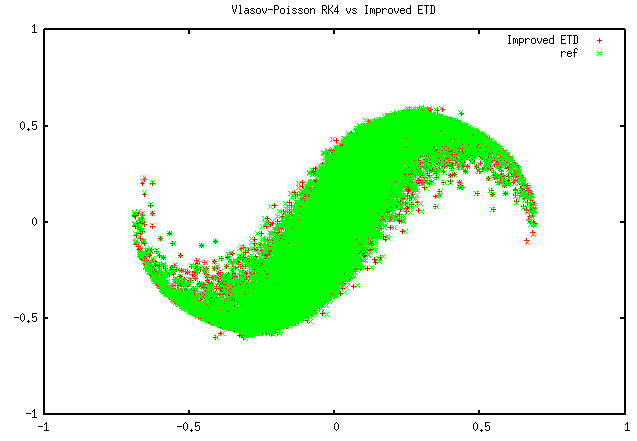} \\
  \includegraphics[scale=0.3]{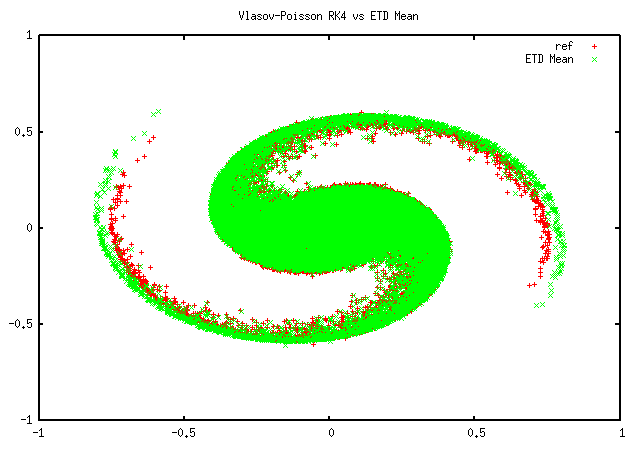} &
  \hspace*{-6mm}
  \includegraphics[scale=0.3]{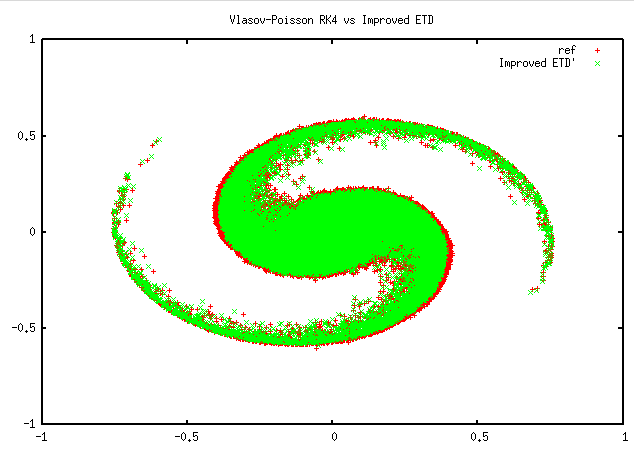} \\
  \includegraphics[scale=0.3]{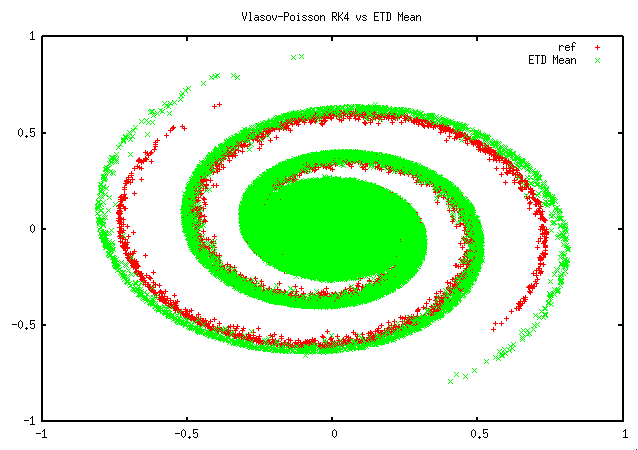} &
  \hspace*{-6mm}
  \includegraphics[scale=0.3]{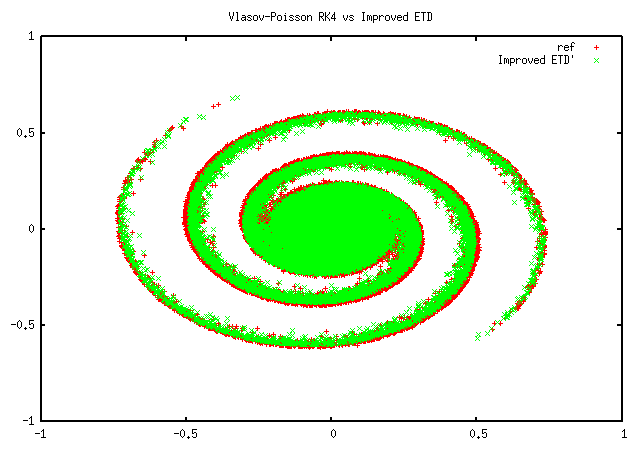} \\
    \includegraphics[scale=0.3]{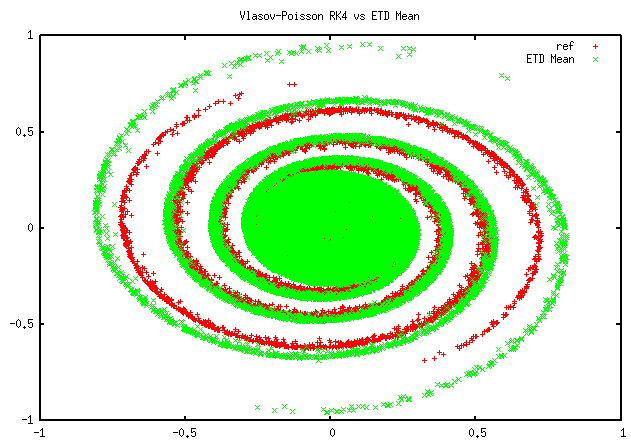} &
  \hspace*{-6mm}
  \includegraphics[scale=0.3]{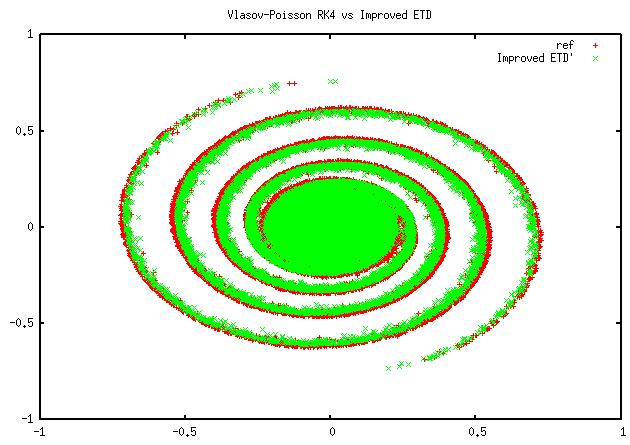}
\end{tabular}
\end{center}
\caption{Vlasov-Poisson test case with $\eps=0.001$. In red: the reference
solution. In green: the solution obtained with the modified
ETD scheme (\textit{left}) and with the improved ETD scheme (\textit{right}).
From top to bottom : $t=15$, $t=30$, $t=45$ and $t=60$. The time step used for
the ETD schemes is $\Delta t=0.5$.}
\label{VPCasePhaseSpaceLTImp}
\end{figure}
The results obtained with $\eps=0.001$ and the same time steps as above
are summarized in Fig. \ref{VPCasePhaseSpaceLTImp}. The case of
$\eps=0.0001$ is illustrated in Fig. \ref{VPlast} at left. Again, one can see
the slower filamentation of the Vlasov-Poisson case in comparison with
the given electric field case.
The time to make one rapid complete tour evolves slowly in time
(see Section \ref{sec2.3}). Therefore, the approximations made in Algorithm
\ref{ETDFSSModAlgo} lead to accurate results. In particular the particle
filaments are well treated by the improved ETD scheme. Note again that the
new algorithm is about $80$ times
faster (when $\eps=0.001$) than the reference solution.

Concluding, the scheme is able to produce quite accurate numerical solutions for
stiff Vlasov type equations for different small values of $\eps$ with the same
computational cost, which is rather close to that of a reduced model.

\begin{figure}[ht]
\begin{center}\hspace*{-8mm}
\begin{tabular}{cc}
\hspace*{3mm}
  \includegraphics[scale=0.33]{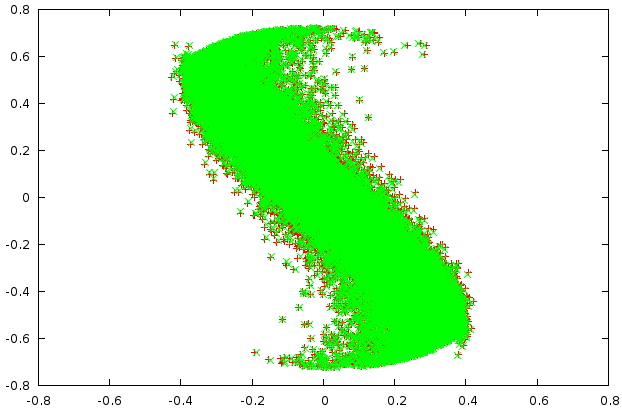} &
  \hspace*{-4mm}
  \includegraphics[scale=0.33]{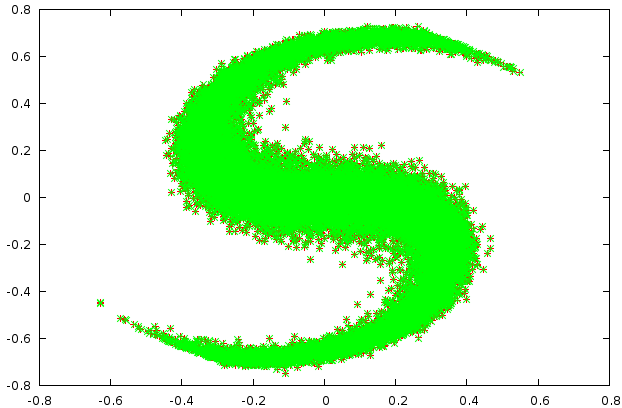} \\
\hspace*{3mm}
  \includegraphics[scale=0.33]{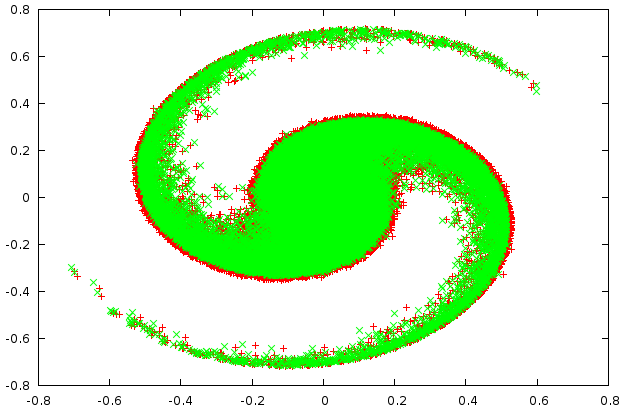} &
  \hspace*{-4mm}
  \includegraphics[scale=0.33]{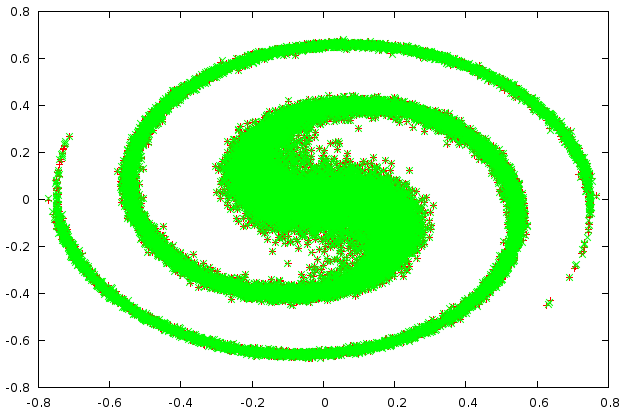} \\
\hspace*{3mm}
  \includegraphics[scale=0.33]{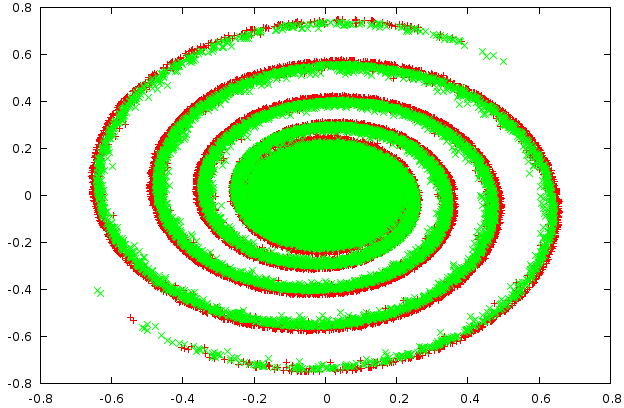} &
\hspace*{-4mm}
  \includegraphics[scale=0.33]{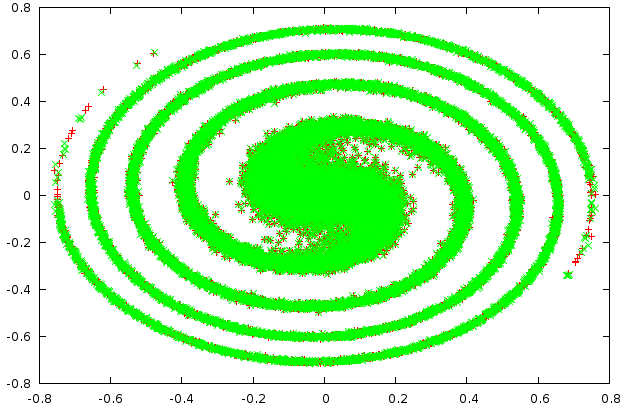} \\ 
\hspace*{3mm}
  \includegraphics[scale=0.33]{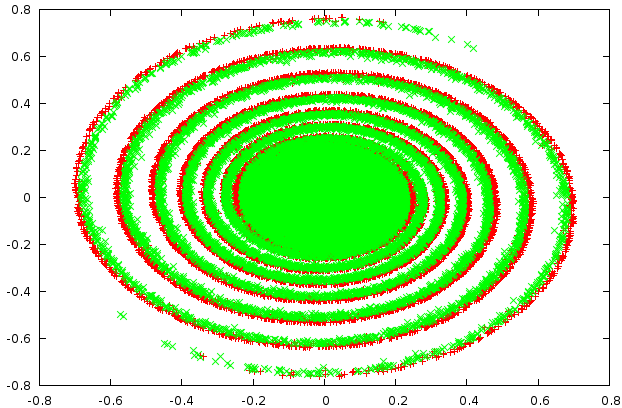} &
\hspace*{-4mm}
  \includegraphics[scale=0.33]{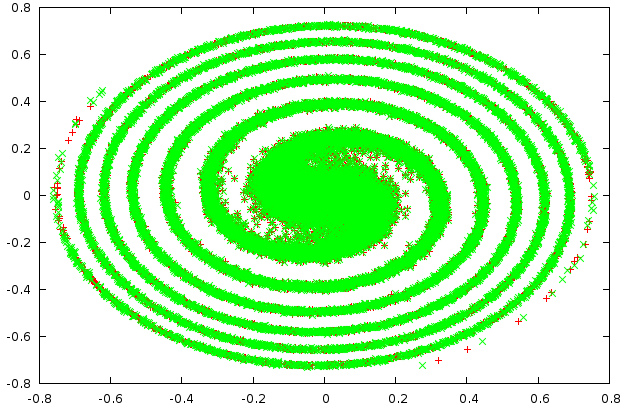} 
\end{tabular}
\end{center}
\caption{The cases of Vlasov-Poisson (\textit{left}) and of Vlasov with
$E_\eps = -r^3$ (\textit{right}), when $\eps=0.0001$. In red: the
reference solution. In green: the solution obtained with the improved ETD scheme.
From top to bottom : $t=10$, $t=30$, $t=60$ and $t=90$. The time step for
the ETD scheme is $\Delta t=0.5$.}
\label{VPlast}
\end{figure}


\appendix 
\section{Appendix: Periods calculus for an undamped and undriven Duffing equation}
\label{AnnexDuffing}   


\setcounter{equation}{0}

Let $\eps>0$ be some small parameter. The solution to the following Duffing-type
equation
\begin{gather}
\label{partDuf}
\left\{
\begin{aligned}
    & R''+ \frac{1}{\eps^2} R + \frac{1}{\eps} R^3=0 \\
    & R(0)=r_0, ~ R'(0)=v_0/\eps
\end{aligned}
\right.
\end{gather}
can be written by using Jacobi elliptic functions. In this appendix we obtain
by elementary calculus (see \cite{Hale}), an approximated value of order $3$ in
$\eps$ for the period of the solution. In particular, we obtain explicitly how
the period $T$ depend on the initial condition $(r_0,v_0)$.

The orbits of the solutions are level curves of the Hamiltonian function
\[
{\cal H}(R,V) = \frac{R^2+\eps^2V^2}{2\eps^2} + \frac{R^4}{4\eps},
\]
where $V$ stands for $R'$. As in Chapter V of reference \cite{Hale}, we
denote the restoring force associated to the hard spring described by equation
\eqref{partDuf} by 
\[
g(R) = \frac{1}{\eps^2} R + \frac{1}{\eps} R^3
\]
and by $G(R)=\cfrac{R^2}{2\eps^2} + \cfrac{R^4}{4\eps}$ the potential energy.
Since the function $G$ has an absolute minimum in $(0,0)$ and no other critical
point, all orbits are periodic. In addition, the closed orbit
of a periodic solution is symmetric with respect to the $R$-axis and intersect it
at two points, $(a,0)$ and $(b,0)$, with $a<b$. Taking into account that the
function $g$ is odd, we have $a=-b$ and finally, we can determine an implicit
formula for the period of the solution whose orbit intersect the $R$-axis in
$(b,0)$:
\begin{equation}\label{1st_eqT}
T = 4 \int_0^b\frac{1}{\sqrt{2h - 2G(R)}}dR,
\end{equation}
where $h$ is the value of the Hamiltonian on the solution of system
\eqref{partDuf} \begin{equation}\label{b_eq}
h = G(b) = {\cal H}(r_0,v_0).
\end{equation}
Replacing $h$ in \eqref{1st_eqT}, we obtain
\begin{equation}
T= 4\eps \int_0^1\frac{1}{\sqrt{1-R^2 + \frac{b^2\eps}{2}(1-R^4)}}dR.
\end{equation}
Then, solving equation \eqref{b_eq} for $b$ gives
\begin{equation}\label{b_val}
b=\sqrt{\frac{1}{\eps}\big(\sqrt{1+2\eps(r_0^2+v_0^2) +\eps^2r_0^4}-1\big)},
\end{equation}
which by Taylor expansion (recall $0<\eps\ll 1$) results in 
\begin{equation}\label{epsb^2_DL}
b^2\eps=(r_0^2+v_0^2)\eps+\frac{r_0^4-(r_0^2+v_0^2)^2}{2}\eps^2+{\cal O}(\eps^3).
\end{equation}
Therefore, the parameter $\alpha = \cfrac{b^2\eps}{2}$ is small when $\eps\ll 1$.
Thus, by Taylor expanding to second order the function
$$\alpha \mapsto \frac{1}{\sqrt{1-R^2 + \alpha (1-R^4)}}$$
for each $R$, we obtain the following approximation to the period 
\begin{equation}\label{1stDL_T}
T=2\pi\eps-\frac{3\pi}{4} b^2\eps^2 +\frac{57\pi}{128}b^4\eps^3+{\cal O}(\eps^4).
\end{equation}
By injecting \eqref{epsb^2_DL} in \eqref{1stDL_T}, we finally obtain 
\begin{eqnarray}\label{2ndDL_T}
T &=& 2\pi\eps-\frac{3\pi}{4}(r_0^2+v_0^2) \eps^2\nonumber\\
 & & + \left(\frac{105\pi}{128}(r_0^2+v_0^2)^2 - \frac{3\pi}{8}r_0^4\right)
\eps^3\nonumber\\
 & & + \, {\cal O}(\eps^4).
\end{eqnarray}

\bibliographystyle{plain}
\bibliography{biblio}

\begin{thebibliography}{1}

\bibitem{Birdsall}
C.K. Birdsall and A.B. Langdon.
\newblock {\em Plasma Physics via Computer Simulation}.
\newblock Institute of Physics, Bristol and Philadelphia, 1991.

\bibitem{cox/matthews}
S.~M. Cox and P.~C. Matthews.
\newblock Exponential time differencing for stiff systems.
\newblock {\em J. Comput. Phys.}, 176:430--455, 2002.

\bibitem{frenod/hirstoaga/sonnendrucker}
E.~Fr\'enod, S.~A. Hirstoaga, and E.~Sonnendr\"ucker.
\newblock An exponential integrator for a highly oscillatory {V}lasov equation.
\newblock {\em Discrete Contin. Dyn. Syst. Ser. S}, 2014 (In Press).

\bibitem{2007arXiv0710.3983F}
E.~{Fr{\'e}nod}, F.~{Salvarani}, and E.~{Sonnendr{\"u}cker}.
\newblock {Long time simulation of a beam in a periodic focusing channel via a
  two-scale PIC-method}.
\newblock {\em Math. Models Methods Appl. Sci.}, 19(2):175--197, 2009.

\bibitem{Hale}
J.~K. Hale.
\newblock {\em Ordinary differential equations}.
\newblock John Wiley, New York, 1969.

\bibitem{HochbruckOstermann}
M.~Hochbr\"uck and A.~Ostermann.
\newblock Exponential integrators.
\newblock {\em Acta Numer.}, 19:209--286, 2010.

\bibitem{lutz}
M.~Lutz.
\newblock Application of {L}ie {T}ransform {T}echniques for simulation of a
  charged particle beam.
\newblock {\em Discrete Contin. Dyn. Syst. Ser. S}, 2014 (In Press).

\end{thebibliography}

\end{document}